\theoremstyle{plain}
\newtheorem{theorem}{Theorem}[section]
\newtheorem{proposition}[theorem]{Proposition}
\theoremstyle{definition}
\newtheorem{example}[theorem]{Example}
\theoremstyle{remark}
\numberwithin{equation}{section}
\newcommand{\ap}{\alpha_p}
\begin{document}

\baselineskip=17pt

\title[]{Relation between asymptotic $L_p$-convergence and some classical modes of convergence}

\author[Nuno J. Alves]{Nuno J. Alves}
\address[Nuno J. Alves]{
      University of Vienna, Faculty of Mathematics, Oskar-Morgenstern-Platz 1, 1090 Vienna, Austria.}
\email{nuno.januario.alves@univie.ac.at}

\author[Giorgi G. Oniani]{Giorgi G. Oniani}
\address[Giorgi G. Oniani]{
      Kutaisi International University, School of Computer Science and Mathematics, Kutaisi 4600, Georgia.}
\email{giorgi.oniani@kiu.edu.ge}

\begin{abstract} 
Asymptotic $L_p$-convergence, which resembles convergence in $L_p$, was introduced to address a question in diffusive relaxation. This note aims to compare asymptotic $L_p$-convergence with convergence in measure and in weak $L_p$ spaces. One of the results characterizes convergence in measure on finite measure spaces in terms of asymptotic $L_p$-convergence.

\end{abstract}

\subjclass[2020]{28A20}

\keywords{convergence in measure, weak $L_p$ spaces, asymptotic $L_p$-convergence}

\maketitle
\thispagestyle{empty}

Let $(X, \mathbb{X}, \mu)$ be a measure space. By a measurable function, we understand a real-valued $\mu$-measurable function defined on $X$. The set of all such measurable functions (identified $\mu$-a.e.) is denoted by $M(X)$. We use the notation $A^c$ for the complement of a set $A \subseteq X$ with respect to $X$. It will be assumed that $p$ is a number belonging to $[1,\infty)$.\par 
The main purpose of this note is to compare  the convergences in measure and in weak $L_p$ spaces with asymptotic $L_p$-convergence. The latter concept was motivated by a question on convergence in diffusive relaxation and introduced in \cite{alves2024mode}. Specifically, a sequence $(f_n)$ of measurable functions is said to \textit{asymptotically $L_p$-converge} (in short, $\ap$-\textit{converge}) to a measurable function $f$ if there exists a sequence of measurable sets $(B_n)$ with $\mu (B_n^c) \to 0$ as $n \to \infty$ such that $\int_{B_n} |f_n - f|^p \, \mathrm{d}\mu \to 0$ as $n \to \infty.$

\section{Asymptotic $L_p$-convergence vs. convergence in measure}

 In \cite{alves2024mode}, it was shown that $\ap$-convergence implies convergence in measure, and an example was given of a sequence of functions that converges in measure but does not $\ap$-converge. This example, however, is in an infinite measure space. For finite measure spaces, these two notions of convergence are, in fact, equivalent. Recall that a sequence $(f_n)$ is said to \textit{converge in measure} to a measurable function $f$ if for every $\delta>0$, $\mu\big(  \{x \in X \ | \ |f_n(x) - f(x)| \geq \delta \} \big)\to 0$ as $n \to \infty$.
We prove the following:
\begin{theorem} \label{alpha_mu}
Let  $f_n$ $(n\in \mathbb{N})$ and $f$ be measurable functions. If $(f_n)$ $\ap$-converges to $f$, then $(f_n)$ converges to $f$ in measure. On the other hand, if $\mu(X) < \infty$ and $(f_n)$ converges to $f$ in measure, then $(f_n)$ $\ap$-converges to $f$.
\end{theorem}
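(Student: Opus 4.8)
For the first implication I would invoke Markov's inequality. Assuming $(f_n)$ $\ap$-converges to $f$ with witnessing sets $(B_n)$ (so $\mu(B_n^c)\to 0$ and $\int_{B_n}|f_n-f|^p\,\mathrm d\mu\to 0$), fix $\delta>0$ and split $\{|f_n-f|\ge\delta\}$ over $B_n$ and $B_n^c$ to get
\[
\mu\big(\{|f_n-f|\ge\delta\}\big)\le \frac1{\delta^p}\int_{B_n}|f_n-f|^p\,\mathrm d\mu+\mu(B_n^c),
\]
whose right-hand side vanishes as $n\to\infty$. This direction is already in \cite{alves2024mode}, and the short argument is included only for completeness.

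The substance is the converse. Assume $\mu(X)<\infty$ and that $(f_n)$ converges to $f$ in measure, and set $g_n:=|f_n-f|$. My plan is to take each $B_n$ to be a sublevel set $B_n:=\{g_n<\varepsilon_n\}$ for a positive sequence $\varepsilon_n\to 0$ still to be chosen. With any such choice, finiteness of $\mu$ gives at once
\[
\int_{B_n}|f_n-f|^p\,\mathrm d\mu\le \varepsilon_n^p\,\mu(X)\longrightarrow 0,
\]
so everything hinges on selecting $\varepsilon_n$ tending to $0$ while still forcing $\mu(B_n^c)=\mu(\{g_n\ge\varepsilon_n\})\to 0$.

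To arrange this I would use a diagonal argument. For each $k\in\mathbb N$, convergence in measure at level $1/k$ yields an index $N_k$, which I may take strictly increasing in $k$, such that $\mu(\{g_n\ge 1/k\})<1/k$ whenever $n\ge N_k$. Setting $\varepsilon_n:=1/k$ for $N_k\le n<N_{k+1}$ (and $\varepsilon_n:=1$ for $n<N_1$) makes $\varepsilon_n\to 0$, and for $n$ lying in the block $[N_k,N_{k+1})$ one has $n\ge N_k$, hence $\mu(\{g_n\ge\varepsilon_n\})=\mu(\{g_n\ge 1/k\})<1/k\to 0$. Thus $B_n=\{g_n<\varepsilon_n\}$ witnesses $\ap$-convergence of $(f_n)$ to $f$.

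I expect the diagonalization to be the only delicate point: convergence in measure controls $\mu(\{g_n\ge\delta\})$ only for each fixed $\delta$, so the thresholds $\varepsilon_n$ must be driven to $0$ along the staircase prescribed by the indices $N_k$, not arbitrarily fast. The hypothesis $\mu(X)<\infty$ is what then converts the resulting pointwise smallness of $g_n$ on $B_n$ into smallness of $\int_{B_n}|f_n-f|^p\,\mathrm d\mu$, and the infinite-measure counterexample of \cite{alves2024mode} shows it cannot be dispensed with.
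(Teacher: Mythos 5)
Your proposal is correct and follows essentially the same route as the paper: the first direction is the same Chebyshev/Markov splitting of $\{|f_n-f|\ge\delta\}$ over $B_n$ and $B_n^c$, and the converse uses the same staircase of thresholds $\varepsilon_n=1/k$ on the index blocks $[N_k,N_{k+1})$ with $B_n$ the corresponding sublevel set, the finiteness of $\mu(X)$ bounding the integral by $\varepsilon_n^p\,\mu(X)$. No gaps; your explicit handling of the indices $n<N_1$ is a minor tidiness the paper leaves implicit.
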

The first part of this theorem was established in \cite{alves2024mode}, but here we provide a simpler proof. 
\begin{proof}
First, assume that $(f_n)$ $\ap$-converges to $f$. For each $\delta > 0$ let the set $E_n(\delta)$ be given by
\[E_n(\delta) = \{x \in X \ | \ |f_n(x) - f(x)| \geq \delta \}. \]
Then 
\begin{align*}
\delta^p \mu(E_n(\delta)) & = \delta^p \mu(E_n(\delta) \cap B_n) +  \delta^p \mu(E_n(\delta) \cap B_n^c) \\
& \leq \int_{E_n(\delta) \cap B_n} \delta^p \, \mathrm{d}\mu + \delta^p \mu(B_n^c) \\
& \leq \int_{B_n} |f_n - f|^p \, \mathrm{d}\mu + \delta^p \mu(B_n^c)
\end{align*}
where $(B_n)$ is a sequence of measurable sets associated with the $\ap$-convergence of $(f_n)$ towards $f$. Letting $n \to \infty$ yields the desired conclusion. \par 
Now, assume that $X$ has finite measure and that $(f_n)$ converges to $f$ in measure. Then, for each $n \in \mathbb{N}$, there exists $N_n \in \mathbb{N}$ such that  $\mu(E_k(1/n)) <1/n$ whenever $k \geq N_n$. Assume, without loss of generality, that $N_{n+1} > N_n$ for every natural $n$. Let $(\lambda_k)$ be a sequence of positive numbers such that for every $n\in \mathbb{N}$, $\lambda_k = 1/n$ whenever $k \in [N_n, N_{n+1})$, and set $B_k = E_k(\lambda_k)^c$ $(k\in \mathbb{N})$. Notice that $\mu(B_k^c) \to 0$ as $k \to \infty$ and 
\[\int_{B_k} |f_k - f|^p \, \mathrm{d}\mu \leq \int_{B_k} \lambda_k^p \, \mathrm{d}\mu \leq \lambda_k^p \mu(X) \to 0 \quad \text{as} \ k \to \infty \]
which finishes the proof.
\end{proof}
Furthermore, there are notions of Cauchy sequences related to $\ap$-convergence and convergence in measure. We say that a sequence $(f_n)$ of measurable functions is \textit{asymptotically $L_p$-Cauchy} (in short, $\ap$-\textit{Cauchy}) if there exists a sequence of measurable sets $(B_n)$ with $\mu (B_n^c) \to 0$ as $n \to \infty$ such that 
\[\int_{B_n \cap B_m} |f_n - f_m|^p \, \mathrm{d}\mu \to 0 \quad  \text{as} \ n,m \to \infty, \]
and it is \textit{Cauchy in measure }if 
\[ \forall \delta > 0 \quad \mu\big(  \{x \in X \ | \ |f_n(x) - f_m(x)| \geq \delta \}  \big) \to 0  \quad \text{as} \ n,m \to \infty.\]  
It is known that $\ap$-Cauchy sequences are Cauchy in measure, and that $\ap$-Cauchy sequences (respectively, Cauchy in measure) $\ap$-converge (respectively, converge in measure) to a measurable function $f$ (see \cite{alves2024mode, bartle1995elements}). This, together with Theorem \ref{alpha_mu}, yields that in a finite measure space these two notions of Cauchy sequences are equivalent.
\begin{theorem} \label{alpha_mu_cauchy}
Let $(f_n)$ be a sequence of measurable functions. If $(f_n)$ is $\ap$-Cauchy, then $(f_n)$ is Cauchy in measure. On the other hand, if $\mu(X) < \infty$ and $(f_n)$ is Cauchy in measure, then $(f_n)$ is $\ap$-Cauchy.
\end{theorem}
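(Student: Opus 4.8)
The plan is to prove the two implications separately, mirroring the structure of the proof of Theorem~\ref{alpha_mu}. For the first implication I would argue directly, exactly as in the first part of that proof. Let $(B_n)$ be a sequence of measurable sets witnessing that $(f_n)$ is $\ap$-Cauchy, and for $\delta>0$ set $E_{n,m}(\delta)=\{x\in X \mid |f_n(x)-f_m(x)|\ge \delta\}$. Splitting $E_{n,m}(\delta)$ over $B_n\cap B_m$ and its complement, and using $(B_n\cap B_m)^c = B_n^c\cup B_m^c$, one gets
\[
\delta^p\mu(E_{n,m}(\delta)) \le \int_{B_n\cap B_m}|f_n-f_m|^p\,\mathrm{d}\mu + \delta^p\mu(B_n^c) + \delta^p\mu(B_m^c).
\]
Letting $n,m\to\infty$ and using $\mu(B_n^c)\to 0$ together with the defining property of an $\ap$-Cauchy sequence yields $\mu(E_{n,m}(\delta))\to 0$; hence $(f_n)$ is Cauchy in measure.

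For the second implication I would invoke the completeness facts recalled just before the statement. Assume $\mu(X)<\infty$ and that $(f_n)$ is Cauchy in measure. Then $(f_n)$ converges in measure to some measurable function $f$, and by the second part of Theorem~\ref{alpha_mu} it follows that $(f_n)$ $\ap$-converges to $f$. Let $(B_n)$ be the sequence of sets associated with this $\ap$-convergence, so $\mu(B_n^c)\to 0$ and $\int_{B_n}|f_n-f|^p\,\mathrm{d}\mu\to 0$. Keeping the \emph{same} sets $(B_n)$, the elementary inequality $|a-b|^p\le 2^{p-1}(|a|^p+|b|^p)$ gives
\[
\int_{B_n\cap B_m}|f_n-f_m|^p\,\mathrm{d}\mu \le 2^{p-1}\Bigl(\int_{B_n}|f_n-f|^p\,\mathrm{d}\mu + \int_{B_m}|f_m-f|^p\,\mathrm{d}\mu\Bigr),
\]
and the right-hand side tends to $0$ as $n,m\to\infty$. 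Thus $(f_n)$ is $\ap$-Cauchy. Note that this last step in fact shows, with no finiteness assumption, that $\ap$-convergence implies $\ap$-Cauchy; the hypothesis $\mu(X)<\infty$ enters only through the appeal to Theorem~\ref{alpha_mu}.

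The argument is essentially routine once the right ingredients are assembled, so I do not expect a genuine obstacle. The one point that deserves a little care is that the $\ap$-Cauchy condition requires a single sequence of sets $(B_n)$ controlling all pairs $(n,m)$ simultaneously; this is why it is important that we can reuse verbatim the sets furnished by $\ap$-convergence, rather than having to manufacture them pair by pair and then reconcile the choices. With that observation in place, all limits pass through without difficulty.
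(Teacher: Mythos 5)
Your proof is correct and follows essentially the same route as the paper: the first implication by the same splitting argument with $E_{n,m}(\delta)$ and $B_n\cap B_m$, and the second by passing through Theorem~\ref{alpha_mu}. The only difference is that you explicitly justify the final step that $\ap$-convergence implies $\ap$-Cauchy (via $|a-b|^p\le 2^{p-1}(|a|^p+|b|^p)$ with the same sets $B_n$), which the paper leaves implicit.
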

\begin{proof}
The first part can be deduced similarly to the proof of the first part of Theorem \ref{alpha_mu}, but with $E_n(\delta)$ replaced by $E_{n,m}(\delta) = \{ x \in X \ | \ |f_n(x) - f_m(x)| \geq \delta\}$ and $B_n$ replaced by $B_n \cap B_m$. \par 
Regarding the second part, assume that $\mu(X) < \infty$ and that $(f_n)$ is Cauchy in measure. Then, there exists a measurable function $f$ such that $(f_n)$ converges to $f$ in measure. By Theorem \ref{alpha_mu} it follows that $(f_n)$ $\ap$-converges to $f$, and hence $(f_n)$ is $\ap$-Cauchy, as desired.  
\end{proof}

\section{Asymptotic $L_p$-convergence vs. convergence in weak $L_p$ spaces}
The \textit{weak $L_p$ space}, denoted by $L_{p,\infty}$, consists of all measurable functions $f$ such that
\[\sup_{\delta > 0} \, \delta^p \mu(\{ x \in X \ | \ |f(x)| \geq \delta \})  < \infty. \]
Let  $f_n$ $(n\in \mathbb{N})$ and $f$ be functions belonging to $L_{p,\infty}$. The sequence $(f_n)$  is said to \textit{converge in} $L_{p,\infty}$ to  $f$ (see, e.g., \cite{castillolp}) if \[\sup_{\delta > 0} \, \delta^p \mu(\{ x \in X \ | \ |f_n(x)-f(x)| \geq \delta \}) \to 0 \quad \text{as} \ n \to \infty.\]
It is clear that this convergence implies convergence in measure. \par

 The next example shows that asymptotic $L_p$-convergence does not imply convergence in weak $L_p$ spaces even for spaces with finite measure.

\begin{example} \label{example1}
Let $X = [0,1]$, $\mathbb{X}$ be the collection of all Lebesgue measurable subsets of $[0,1]$, and $\mu$ be  Lebesgue measure on $\mathbb{X}$. Consider the sequence of functions $(f_n)$ defined by $f_n = n^{1/p} \chi_{[0,1/n]}$. Then $(f_n)$ $\ap$-converges to $0$ (zero function), but it does not converge to $0$ in $L_{p,\infty}$. To check the latter, for each $\delta > 0$ let $F_n(\delta) = \delta^p \mu( \{x \in [0,1] \ | \ |f_n(x)| \geq \delta \})$ and notice that 
\begin{equation*}
F_n(\delta) = \begin{dcases}
\frac{\delta^p}{n} \quad & \text{if} \ 0 < \delta \leq n^{1/p}, \\
0 \quad & \text{if} \  \delta > n^{1/p}.
\end{dcases}
\end{equation*}
Then $\sup_{\delta > 0} F_n(\delta) = 1$ for every natural $n$ and hence $(f_n)$ does not converge to $0$ in $L_{p,\infty}$.
\end{example}

The next example shows that the inverse implication is also false, i.e., convergence in weak $L_p$ spaces does not imply asymptotic $L_p$-convergence. 

\begin{example} \label{example2}
Let $X = [1,\infty)$, $\mathbb{X}$ be the collection of all Lebesgue measurable subsets of $[1,\infty)$, and $\mu$ be  Lebesgue measure on $\mathbb{X}$. Let $f_n$ be given by \[f_n(x) = \frac{1}{(nx)^{\frac{1}{p}}}  \;\;\;\;(x\in[1,\infty)).\]
Then 
\begin{align*}
F_n(\delta) & = \delta^p \mu( \{x \in[1,\infty) \ | \ |f_n(x)| \geq \delta \}) \\
&  = \begin{dcases}
 \frac{1-n \delta^p}{n}  \quad & \text{if} \ 0 < \delta \leq \frac{1}{n^{1/p}}, \\
0 \quad & \text{if} \  \delta > \frac{1}{n^{1/p}},
\end{dcases}
\end{align*}
and so $\sup_{\delta > 0} F_n(\delta) = 1/n \to 0$ as $n \to \infty$, i.e., $(f_n)$ converges to $0$ in $L_{p,\infty}$. \par 
Next, we show that $(f_n)$ does not $\ap$-converge to $0$. Let $(B_n)$ be any sequence of measurable subsets of $[1,\infty)$ such that $\mu(B_n^c) \to 0$ as $n \to \infty$. Let $N \in \mathbb{N}$ be such that $\mu(B_n^c) < 1$ for all $n \geq N$. Then, for $n \geq N$, 
\[\frac{1}{n} \int_{B_n^c} \frac{1}{x} \, \mathrm{d}x \leq \frac{1}{n} \mu(B_n^c) < \frac{1}{n} < \infty. \]
Notice that \[\frac{1}{n} \int_{B_n} \frac{1}{x} \, \mathrm{d}x = \frac{1}{n} \int_1^\infty \frac{1}{x}  \, \mathrm{d}x - \frac{1}{n} \int_{B_n^c} \frac{1}{x}  \, \mathrm{d}x. \]
Since the first integral on the right-hand side is infinite and for $n \geq N$ the second integral on the right-hand side is finite, it follows that for $n \geq N$ the integral on the left-hand side is infinite, i.e., $(f_n)$ does not $\ap$-converge to $0$.
\end{example}

Note that by Theorem 1.1, for spaces with finite measure, convergence in weak $L_p$ spaces implies asymptotic $L_p$-convergence. 

\section{Almost $L_p$ spaces vs. weak $L_p$ spaces}

Denote by $A_p$ be the space of measurable functions that are \textit{almost in} $L_p$, i.e.,
\[A_p = \left\lbrace f \in M(X) \ | \ \forall \delta > 0 \ \exists E_\delta \ \text{with} \ \mu(E_\delta) < \delta \ \text{such that} \ \int_{E_\delta^c} |f|^p \, \mathrm{d}\mu < \infty \right\rbrace. \]
These spaces are named as \textit{almost} $L_p$ \textit{spaces} and   were introduced in \cite{bravo2012optimal,calabuig2019representation}. It is clear that $L_p \subseteq A_p$, and it is not hard to see that there are functions belonging to $A_p$ but not to $L_p$.

Below, we give examples that show that in general measure spaces, $A_p \setminus L_{p,\infty}$ and $L_{p,\infty} \setminus A_p$ are non-empty. Then, at last, we prove that $L_{p,\infty}$ is contained in $A_p$ when $\mu(X) < \infty$.

\begin{example} \label{example3}
Let $X = (0,1)$, $\mathbb{X}$ be the collection of all Lebesgue measurable subsets of $(0,1)$, and $\mu$ be  Lebesgue measure on $\mathbb{X}$.	
	Let  $f : (0,1) \to \mathbb{R}$ be defined by 
	\[f(x) = \frac{1}{x^2} \quad (x\in (0,1)). \]
	
	This function does not belong to $L_{p,\infty}$. Indeed, for every $\delta\geq 1$ we have that
 $$\delta^p \mu (\{x \in (0,1) \ | \ |f(x)| 		
		 \geq \delta \}) =	\delta^{p-1/2}.
		 $$
	 Letting $\delta \to \infty$ yields the desired conclusion. 
	\par 
On the other hand, since $f$ is decreasing, it is easy to see that $f$ belongs to $A_p$. 
\end{example}

\begin{example} \label{example4}
Let $X = [1,\infty)$, $\mathbb{X}$ be the collection of all Lebesgue measurable subsets of $[1,\infty)$, and $\mu$ be  Lebesgue measure on $\mathbb{X}$. Let $f$ be defined by \[f(x) = \frac{1}{x^{\frac{1}{p}}}  \;\;\;\;(x\in [1,\infty)).\]

Using a similar reasoning as in Example \ref{example2} one can conclude that $f$ does not belong to $A_p$ but it belongs to $L_{p,\infty}$.
\end{example}

\begin{proposition}
If $\mu(X) < \infty$, then $L_{p,\infty} \subseteq A_p$. 
\end{proposition}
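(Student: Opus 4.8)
The plan is to prove the inclusion by truncating $f$ at a high level: the weak-$L_p$ estimate will control the \emph{measure} of the truncation set, while the finiteness of $\mu(X)$ will control the \emph{integral} of the truncated function. So fix $f \in L_{p,\infty}$, write $C = \sup_{\delta > 0} \delta^p \mu(\{x \in X \ | \ |f(x)| \geq \delta\}) < \infty$, and let $\delta > 0$ be the parameter appearing in the definition of $A_p$. I would take $E_\delta = \{x \in X \ | \ |f(x)| > \lambda\}$ for a threshold $\lambda > 0$ to be chosen in terms of $\delta$ and $C$.

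The first step is to verify $\mu(E_\delta) < \delta$. Since $E_\delta \subseteq \{x \in X \ | \ |f(x)| \geq \lambda\}$, the defining property of $L_{p,\infty}$ gives $\lambda^p \mu(E_\delta) \leq \lambda^p \mu(\{|f| \geq \lambda\}) \leq C$, hence $\mu(E_\delta) \leq C/\lambda^p$, which is less than $\delta$ as soon as $\lambda > (C/\delta)^{1/p}$. Fix such a $\lambda$.

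The second step is to bound $\int_{E_\delta^c} |f|^p \, \mathrm{d}\mu$. On $E_\delta^c = \{x \in X \ | \ |f(x)| \leq \lambda\}$ we have $|f|^p \leq \lambda^p$ pointwise, so
\[ \int_{E_\delta^c} |f|^p \, \mathrm{d}\mu \leq \lambda^p \mu(E_\delta^c) \leq \lambda^p \mu(X) < \infty, \]
where the last inequality uses the hypothesis $\mu(X) < \infty$. Since $\delta > 0$ was arbitrary, this exhibits the sets required in the definition of $A_p$ and shows $f \in A_p$.

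There is no serious obstacle in this argument; the point worth emphasizing is that the finite-measure hypothesis enters precisely in the final display, and it cannot be dropped, as Example~\ref{example4} already illustrates. One could instead estimate $\int_{E_\delta^c}|f|^p\,\mathrm{d}\mu$ via the layer-cake identity $\int_{E_\delta^c}|f|^p\,\mathrm{d}\mu = \int_0^\lambda p t^{p-1}\mu(\{t < |f| \leq \lambda\})\,\mathrm{d}t$ and the weak-$L_p$ bound, but the crude estimate above is already sufficient and avoids any extra bookkeeping.
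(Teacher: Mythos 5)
Your proof is correct and follows essentially the same route as the paper's: truncate $f$ at a level chosen so that the weak-$L_p$ bound makes the superlevel set smaller than $\delta$, then bound the integral over the complement by the truncation level times $\mu(X)$. The only cosmetic difference is that you use a real threshold $\lambda$ where the paper picks an integer $K$.
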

\begin{proof}
By hypothesis we have 
\[\sup_{\delta > 0} \delta^p \mu (\{x \in X \ | \ |f(x)|  \geq \delta \}) = C < \infty. \]
In particular, for each $k \in \mathbb{N}$ it holds that
\[ \mu (\{x \in X \ | \ |f(x)|  \geq k \}) \leq \frac{C}{k^p}.\]
Given $\delta > 0$, let $K \in \mathbb{N}$ be such that 
\[ \mu (\{x \in X \ | \ |f(x)|  \geq K \}) \leq \frac{C}{K^p} < \delta\]
and set $E_\delta = \{x \in X \ | \ |f(x)|  \geq K \}$. Then 
\[\int_{E_\delta^c} |f|^p \, \mathrm{d}\mu \leq K \mu(X) < \infty \]
which concludes the proof.
\end{proof}

\section*{Acknowledgements}
Some of the questions addressed in this note were proposed in the problem session of the $46^{\text{th}}$ Summer Symposium in Real Analysis, which took place at the University of \L\'{o}d\'{z}, Poland, in June 2024. This research was partially funded by the Austrian Science Fund (FWF) project 10.55776/F65.

%%%%%%%%%%% To ease editing, use normal size for the references:

\normalsize

\end{document}